\documentclass[reqno]{amsart}
\usepackage[utf8]{inputenc}
\usepackage[T2A]{fontenc}
\usepackage[russian,english]{babel}
\makeatletter
\def\@settitle{\begin{center}%
\baselineskip14\p@\relax
\bfseries
\@title
\end{center}%
}
\makeatother
\usepackage{amsthm, amsmath, amssymb}
\usepackage{comment}
\usepackage{mathrsfs}
\usepackage{graphicx}
\usepackage{varioref}
\usepackage{cleveref}
\usepackage{xy}
\xyoption{all}
\usepackage[enableskew]{youngtab}
\usepackage{multirow}
\usepackage{comment}
\usepackage{cite}

\theoremstyle{theorem}
\newtheorem{theo}{Theorem}[section]
\newtheorem{lemma}[theo]{Lemma}

\theoremstyle{definition}
\newtheorem{defi}[theo]{Definition}

\theoremstyle{remark}
\newtheorem{remark}[theo]{Remark}

\usepackage{ifpdf}
\ifpdf
	\DeclareGraphicsRule{.eps}{mps}{*}{}
	\makeatletter
		\g@addto@macro\Gin@extensions{,.eps}
	\makeatother
\fi
\sloppy

\title[]{Hirzebruch functional equations \\ and complex Krichever genera}

\author{I.\,V.\,Netay}

\thanks{This work is supported by the Russian Science Foundation under grant 14-50-00005.}
%

\address{Steklov Mathematical Institute of Russian Academy of Sciences}

\begin{document}
\maketitle

\begin{abstract}
	It is well known that the two-parametric Todd genus and elliptic functions of level~$d$ define $n$-multiplicative Hirzebruch genera, if~$d$ divides~$n+1$.
	Both these cases are particular cases of Krichever genera defined by the Baker--Akhiezer functions.
	In this work the inverse problem is solved.
	Namely, it is proved that only these families of functions define $n$-multiplicative Hirzebruch genera among all the Krichever genera for all~$n$.
\end{abstract}

\tableofcontents

\section{Introduction}

\subsection{Motivation}
	The paper is devoted to study of solutions of functional equation
	\begin{equation}
		\label{eq:H}
		\sum_{k=0}^n\prod_{i\ne k} \frac{1}{f(u_i-u_k)} = C.
	\end{equation}
	with initial data~$f(0)=0$,~$f'(0)=1$ known as {\it Hirzebruch functional equation}.
	The following topological interpretation is well known (see details in~\cite[\S4]{BucNet15}).
	For any formal series~$f(t)=t+\ldots$ over a coefficient ring~$R$ one can construct {\it complex Hirzebruch genus}~$L_f$.
	It defines a homomorphism~$L_f\colon \Omega_U\to R$, where~$\Omega_U$ is the cobordism ring of stably complex manifolds.

	Hirzebruch genus~$L_f$ is called {\it fiberwise multiplicative} with respect to bundles of stably complex manifolds with fiber~$F$, if for any such a bundle~$E\to B$ with fiber~$F$ we have
	$
		L_f(E) = L_f(B) L_f(F).
	$
	If the fiber~$F=\mathbb{P}_\mathbb{C}^n$ is the complex projective space of dimension~$n$, this condition is called {\it$n$-multiplicativity} and is equivalent to equation~\eqref{eq:H}.

	It is well known that the function
	\begin{equation}
		\label{f:Todd}
		f_{\rm Td}(t) := \frac{e^{\alpha t}-e^{\beta t}}{\alpha e^{\beta t}-\beta e^{\alpha t}}
	\end{equation}
	defines a $n$-multiplicative Hirzebruch genus for each~$n$.
	It is called two-parametric Todd genus and naturally appears in many branches of mathematics, for example, in enumerative combinatorics and toric topology.
	We will see below (see~\cref{lem:Td}) that it satisfies~\eqref{eq:H}.
	It is well known that a function~$f(t)$ determines a multiplicative Hirzebruch genus if and only if it has the form~\eqref{f:Todd} (see~\cite{Mus11}).

	Other interesting families of $n$-multiplicative genera can be constructed in terms of elliptic functions.
	Suppose~$\Lambda=2\pi i(\mathbb{Z}\oplus\tau\mathbb{Z})\subset\mathbb{C}$ is a lattice, where~$\operatorname{Im}\tau>0$.
	There exists a $\Lambda$-periodic function~$g(t)$ with zero of order~$d$ at~$0$, pole of order~$d$ at~$P$ and without other zeros and poles modulo~$\Lambda$ for any some~$d>1$.
	From theory of elliptic functions it is known that this function is uniquely defined under the normalization condition~$g(t)=t^d+\ldots$, and in this case~$P=2\pi i(k+l\tau)/d$ for~$k,l\in\mathbb{Z}$.
	Under the normalization condition it is uniquely defined by the choice of~$P$.

	Under these assumptions we can extract the unique root of degree~$d$ with the normalization condition
	\[
		f_d(t):=\sqrt[d]{g(t)} = t+ \ldots
	\]
	The function~$f_d(t)$ is periodic with respect to the lattice~$d\Lambda$.
	Such functions are called {\it elliptic of level~$d$}.
	They were introduced in~\cite{Hirz88}.

	It is know that elliptic functions of level~$d$ are solutions of~\eqref{eq:H}, if~$d$ divides~$n+1$ (see the proof in~\cite{BucNet15} or~\cite{HBJ}).

\subsection{Main results}
	Some recent papers (see~\cite{BucNet15,BucBun14,BucBun15}) are devoted to classification of solutions of equations~\eqref{eq:H} for different~$n$.
	In~\cite{BucNet15} it is shown that for each~$n>1$ solutions of equation~\eqref{eq:H} are parametrized by points of some affine algebraic variety, if the coefficient ring~$R$ is a field of zero characteristic.
	It is also shown there how to find polynomial equations defining it.
	In~\cite{BucBun15} this variety is described for~$n=2$ with the corresponding functions being solutions of~\eqref{eq:H}, in~\cite{BucNet15} the same for~$n=3$.

	In this work we do not fix any~$n$, but restrict the class of functions.
	Namely, we consider Baker--Akhiezer functions (see the definition in~\cref{sec:sketch and notation}), defining complex Krichever genera.
	These genera were introduced in~\cite{Kri90} and contain listed above as particular and limit cases.
	Our goal is to classify all $n$-multiplicative genera corresponding to Baker--Akhiezer functions for all~$n$.
	The main result is the following Theorem.

	\begin{theo}
		If a function~$f_{\rm Kr}(t)$ defined in~\eqref{f:Kr} and defining a complex Krichever genus is a solution of~\eqref{eq:H} for some~$n$, then one of the following cases holds:
		\begin{itemize}
			\item[(i)] $f_{\rm Kr}(t)=f_{\rm Td}(t)$ for some values~$\alpha$ and~$\beta$;
			\item[(ii)] $f_{\rm Kr}(t)=f_{d}(t)$ is an elliptic function of level~$d$, where~$d$ divides~$n+1$.
		\end{itemize}
	\end{theo}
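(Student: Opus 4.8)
The plan is to play the quasi-periodicity of the Baker--Akhiezer function off against the rigidity of the functional equation~\eqref{eq:H}. Recall from~\cref{sec:sketch and notation} that the function $f_{\rm Kr}(t)$ of~\eqref{f:Kr} is meromorphic on $\mathbb{C}$, is normalized by $f_{\rm Kr}(0)=0$ and $f_{\rm Kr}'(0)=1$, has a single simple pole, say at $t=a$, modulo its period lattice $\Lambda$, and transforms by a \emph{constant} multiplier, $f_{\rm Kr}(t+\ell)=\mu_\ell\,f_{\rm Kr}(t)$ for $\ell\in\Lambda$, where $\mu\colon\Lambda\to\mathbb{C}^{\times}$ is a character. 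The rank of $\Lambda$ is $2$, $1$ or $0$ according to whether the underlying spectral curve is an elliptic curve or one of its degenerations; in the degenerate cases formula~\eqref{f:Kr} specializes, after a reparametrization, to the function $f_{\rm Td}$ of~\eqref{f:Todd}. The first task is therefore to extract from~\eqref{eq:H} the constraints it imposes on the triple $(\Lambda,a,\mu)$, and only afterwards to match the surviving possibilities with the two families in the statement.

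The key step is a single substitution. Write the summands of~\eqref{eq:H} as $T_k=\prod_{i\ne k}f_{\rm Kr}(u_i-u_k)^{-1}$ and replace $u_0$ by $u_0+\ell$ with $\ell\in\Lambda$. From $f_{\rm Kr}(t\pm\ell)=\mu_\ell^{\pm1}f_{\rm Kr}(t)$ one reads off that $T_0\mapsto\mu_\ell^{\,n}T_0$, while $T_k\mapsto\mu_\ell^{-1}T_k$ for $1\le k\le n$. Since the original and the translated identities both equal the same constant $C$, subtracting them and using $\sum_{k\ge1}T_k=C-T_0$ yields
\[
	\mu_\ell^{-1}\bigl(\mu_\ell^{\,n+1}-1\bigr)\,T_0 \;=\; \bigl(1-\mu_\ell^{-1}\bigr)\,C .
\]
The right-hand side is a constant, whereas $T_0=\prod_{i=1}^{n}f_{\rm Kr}(u_i-u_0)^{-1}$ is a nonconstant function of $u_0$ --- it has a pole at each $u_0=u_i$, coming from the simple zeros of $f_{\rm Kr}$. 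Hence for every $\ell\in\Lambda$ we must have $\mu_\ell^{\,n+1}=1$, and in addition either $C=0$ or $\mu_\ell=1$.

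It remains to split into cases. If $C\ne0$, then $\mu_\ell=1$ for all $\ell$, so $f_{\rm Kr}$ is $\Lambda$-periodic; but a nonconstant meromorphic function with a single simple zero and a single simple pole cannot be doubly periodic, since by Abel's theorem its zero and pole would then coincide modulo $\Lambda$. Thus $\Lambda$ has rank $\le1$, the spectral curve degenerates, and $f_{\rm Kr}=f_{\rm Td}$ for suitable $\alpha,\beta$, which is case~(i). If $C=0$, then $\mu_\ell^{\,n+1}=1$ for all $\ell$, so $f_{\rm Kr}^{\,n+1}$ is $\Lambda$-periodic; when $\Lambda$ has rank $\le1$ we again land in case~(i), and when $\Lambda$ has rank $2$ the function $f_{\rm Kr}^{\,n+1}$ is an elliptic function with divisor $(n+1)\bigl([0]-[a]\bigr)$, so Abel's theorem forces $(n+1)a\in\Lambda$. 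Let $d$ be the exact order of $a$ in $\mathbb{C}/\Lambda$; then $d\mid n+1$ (and $d\ge2$, since $a\notin\Lambda$). Because $d\bigl([0]-[a]\bigr)$ is also principal, the elliptic function $g$ with that divisor satisfies $f_{\rm Kr}^{\,n+1}=\mathrm{const}\cdot g^{(n+1)/d}$, hence $f_{\rm Kr}^{\,d}=\mathrm{const}\cdot g$ is elliptic; choosing the $d$-th root so that $f_{\rm Kr}(t)=t+\dots$, we recover exactly the level-$d$ elliptic function attached to the point $P=a$, which by the uniqueness recalled in the introduction is $f_d$. This is case~(ii).

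The computation above is the cheap part; the real obstacle is the bookkeeping around the degenerations. One must verify that every rank-$\le1$ specialization of~\eqref{f:Kr} --- including the rational limit $\alpha=\beta$ --- is genuinely of the form $f_{\rm Td}$, and one must make the root extraction in the elliptic case precise: the branch of $\sqrt[d]{f_{\rm Kr}^{\,d}}$ has to be pinned down by the normalization $f_{\rm Kr}(t)=t+\dots$ and the resulting function identified with the $f_d$ that is uniquely determined by the choice of pole $P=a$. For completeness one then notes the converse: $f_{\rm Td}$ and the functions $f_d$ with $d\mid n+1$ really do solve~\eqref{eq:H}, by~\cref{lem:Td} and the references cited in the introduction, so the two families in the statement are precisely the Baker--Akhiezer solutions.
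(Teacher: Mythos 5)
Your treatment of the non-degenerate case is sound and is essentially the paper's own argument in streamlined form: the paper likewise shifts $u_0$ by a period, uses constancy of the multiplier $\varepsilon_l$, and concludes that $C=0$, that $\varepsilon_l^{n+1}=1$, and hence that $f^{n+1}$ is elliptic and $f=f_d$ with $d\mid n+1$. (The paper routes this through a preliminary lemma rewriting the equation as $\sum_k\mathcal{A}_k/f^{n+1}(u_k)=C\prod_k 1/f(u_k)$ with $\mathcal{A}_k$ abelian; your direct appeal to the poles of $T_0$ at $u_0=u_i$ to see that $T_0$ is nonconstant does the same job.)

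The genuine gap is in the degenerate case, which is where the paper does almost all of its work. You assert that every rank-$\le1$ specialization of~\eqref{f:Kr} is ``after a reparametrization'' equal to $f_{\rm Td}$. That is false: the degenerate Baker--Akhiezer function is $f_{\sing}(t)=e^{\lambda t}/(q+\eta\coth\eta t)$ as in~\eqref{f:BAd}, and the exponential factor $e^{\lambda t}$ is an extra parameter; $f_{\rm Td}$ is exactly the subfamily $\lambda=0$. For $\lambda\ne0$ these functions are generically \emph{not} solutions of~\eqref{eq:H}, and those that are solutions are not Todd genera but the degenerations $\frac{1}{\mu}e^{\lambda\mu t}\sinh\mu t$ with $\lambda=2k/(n+1)$, which occur only when $q=\pm1$. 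Your quasi-periodicity computation does survive in rank $1$, but it only yields the necessary condition $\lambda(n+1)\in2\mathbb{Z}$ (or $\lambda\in2\mathbb{Z}$ when $C\ne0$); it imposes no relation between $\lambda$ and $q$, so it cannot exclude, say, $\lambda=2/(n+1)$ with generic $q$, and it gives nothing at all in the rank-$0$ limit $te^{\lambda t}/(1+qt)$. Closing this requires the separate machinery the paper devotes its second half to: normalize to $e^{\lambda t}/(q+\coth t)$, substitute $x_i=e^{2u_i}$, apply the alternation operator $L$, and compare the resulting Schur--Puiseux functions degree by degree to force either $\lambda=0$ (the Todd case) or $ab=0$ with $\lambda=2k/(n+1)$. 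None of this is present in your proposal, so as written the theorem is not established for degenerate Krichever genera.
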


	Applied methods here are similar to~\cite{BucNet15}, but there are some differences.
	In~\cite{BucNet13} we apply ideas of symmetric functions to an arbitrary formal series defining a Hirzebruch genus.
	Here we restrict the class of functions.
	In the general case we consider some particular symmetric functions and equations on their coefficients.
	Here we can consider all the equations and obtain the complete classification in this family of functions.

\section{Proofs of main results}

\subsection{Sketch of proof and some needed formulas}
\label{sec:sketch and notation}
	Let~$\Lambda\subset \mathbb{C}$ be a lattice and~$\sigma$ and~$\zeta$ be the standard corresponding Weierstrass functions.
	Then the Baker--Akhiezer function is
	\begin{equation}
		\label{f:BA}
		\Phi_s(t) := \frac{\sigma(s - t)}{\sigma(s)\sigma(t)} e^{\zeta(s)t}.
	\end{equation}
	For this function the addition low holds:
	\begin{equation}
		\label{eq:Phi add}
		\Phi_s(x+y) (\wp(y) - \wp(x)) = \Phi_s'(x)\Phi_s(y)  - \Phi_s'(y)\Phi_s(x).
	\end{equation}
	Applying to this another well known relation
	\[
		(\ln \Phi_s(t))' = \frac{1}{2} \frac{\wp'(t)-\wp'(s)}{\wp(t)-\wp(s)},
	\]
	one can rewrite in the form
	\begin{equation}
		\label{eq:Phi add 2}
		\frac{\Phi_s(x+y)}{\Phi_s(x)\Phi_s(y)} = -\frac{1}{2}
		\frac{
			\begin{vmatrix}
				1 & 1 & 1 \\
				\wp(x) & \wp(y) & \wp(s) \\
				\wp'(x) & \wp'(y) & \wp'(s) \\
			\end{vmatrix}
		}{
			\begin{vmatrix}
				1 & 1 & 1 \\
				\wp(x) & \wp(y) & \wp(s) \\
				\wp^2(x) & \wp^2(y) & \wp^2(s) \\
			\end{vmatrix}
		}
	\end{equation}
	We will also need the formula
	\begin{equation}
		\label{eq:Phi minus}
		\Phi_s(x)\Phi_s(-x) = \wp(s) - \wp(x).
	\end{equation}
	These formulas are well known and can be found in~\cite{Kri80,WW}.

	\begin{defi}
		{\it Krichever genus} is a Hirzebruch genus determined by the series expansion of the function
		\begin{equation}
			\label{f:Kr}
			f_{\rm Kr}(t) := \frac{e^{\alpha t}}{\Phi_s(t)}.
		\end{equation}
	\end{defi}


	If the elliptic curve degenerates, then the function~\eqref{f:Kr} becomes the following (see.~\cite{Kri90})
	\begin{equation}
		\label{f:BAd}
		f_{\rm sing}(t) = \frac{e^{\lambda t}}{q + \eta\cth \eta t}.
	\end{equation}

	The methods for classification of $n$-rigid degenerate and non-degenerate Krichever genera are essentially different.
	In the case of non-degenerate Krichever genera we apply methods of elliptic function theory and show that only the elliptic functions of level $d$, where $d$ divides~$n+1$, satisfy the property of  $n$-multiplicativity.
	In the case of degenerate Krichever genera the proof is more complicated and applies methods of symmetric functions.
	Technically it is similar to the proof of classification of $3$-rigid genera in~\cite{BucNet15}.
	But this proof requires the computations in the ring of symmetric meromorphic functions, but not symmetric polynomials.
	In this case the classification includes the two-parametric Todd genus and degenerations of elliptic functions of level~$d$.

	Let us recall some basic facts about symmetrical polynomials and introduce some notation.

	Let~$\lambda = \{\lambda_1\geqslant\ldots\geqslant\lambda_k\}$ be a Young diagram.
	Set
	\[
		\Delta_\lambda =
		\begin{vmatrix}
			x_1^{\lambda_1} & \ldots & x_n^{\lambda_n} \\
			\vdots & \ddots & \vdots \\
			x_1^{\lambda_n} & \ldots & x_n^{\lambda_n} \\
		\end{vmatrix}.
	\]
	Denote~$\delta=(n-1,\ldots,1,0)$.
	It is easy to see that~$\Delta_{\lambda+\delta}$ is divisible by~$\Delta_{\delta}$ for any Young diagram~$\lambda$.

	\begin{defi}
		{\it The Schur polynomial} corresponding to a diagram~$\lambda$ is defined by the formula
		\[
			s_\lambda = \Delta_{\lambda+\delta} / \Delta_\delta.
		\]
	\end{defi}

	Now let us weaken some assumptions to get some appropriate construction for our computations.
	Namely, we omit the assumption that~$\lambda_i \in \mathbb{Z}$.

	\begin{defi}
		Let~$\lambda=\{\lambda_1,\ldots,\lambda_n\}$ be a multiindex.
		We call the {\it Schur--Puiseux function} the expression
		\[
			s_\lambda = \Delta_{\lambda+\delta} / \Delta_\delta.
		\]
	\end{defi}

	After this definition we need to clarify some points to avoid confusion after.
	At first, we do not assume that~$\lambda_i \in \mathbb{Q}$ as it is assumed for Puiseux series.
	We consider~$\lambda_i\in\mathbb{C}$, but interesting for us cases occur only for~$\lambda_i\in\mathbb{Q}$.
	At second, these expressions are not actually polynomials and are not even linear combinations of "Puiseux monomials" in~$x_0,\ldots,x_n$ in general.
	Nevertheless, the construction is the same, and the basic ideas remain the same.

	Let us introduce some differential operators.
	Denote~$\delta_i = (x_i - x_{i-1})^{-1}(1 - \tau_i)$, where~$\tau_i$ acts as a permutation of variables~$x_{i-1}$ and~$x_i$ on the space of polynomials~$R[x_0,\ldots,x_n]$.
	It is well known that~$\delta_i(P)$ is a polynomial, if~$P$ is a polynomial.
	It is easy to see that
	\begin{itemize}
		\item $\delta_i(P)$ is invariant under~$\tau_i$,
		\item $\delta_i(P)=0$ if and only if~$P$ is~$\tau_i$-invariant.
		\item $\delta_i$ is a $\tau_i$-derivation on the ring of polynomials as a module over the subring of symmetric polynomials:
		\[
			\delta_i(P\cdot Q) = \delta_i(P)\cdot Q + \tau_i(P)\cdot\delta_i(Q).
		\]
	\end{itemize}

	Consider the differential operator
	\[
		L = \frac{1}{\Delta} \sum_{\sigma \in \mathfrak{S}_{n+1}}(-1)^\sigma\sigma,
	\]
	where~$\mathfrak{S}_{n+1}$ denotes the permutation group of the variables~$x_0,\ldots,x_n$ and~$\Delta=\Delta_\delta(x_0,\ldots,x_n) = \prod_{i<j}(x_i-x_j)$.
	We use~$x_i$ instead of~$u_i$ here, because we will apply~$L$ for~$x_i=e^{2u_i}$.
	It is well known that (see~\cite{FultonYoungTableaux})
	\[
		L = (\delta_1\ldots\delta_n)(\delta_1\ldots\delta_{n-1})\ldots(\delta_1\delta_2)\delta_1.
	\]

	Clearly, $L(1)=0$ and~$L(P\cdot Q)=L(P)\cdot Q$ for any symmetric polynomial~$Q$.
	Also for any symmetric~$Q$ the following important relation holds:
	\begin{equation}
		\label{eq:L}
		L(\Delta Q) = (n+1)!Q.
	\end{equation}

	The action of~$L$ can be extended to the field of rational functions in~$x_0,\ldots,x_n$.
	The extension of~$L$ possesses all the properties above.
	In the same way it can be extended to the field of Puiseux series~$\mathbb{C}((x_0^{1/N},\ldots,x_n^{1/N}))$ for each~$N$.
	We need to extend it to the field of meromorphic function in~$u_0,\ldots,u_n$.
	We will set~$x_i=e^{2u_i}$, so the powers~$x_i^\lambda$ correspond to the functions~$e^{2\lambda u_i}$.
	Obviously, the operator~$\frac{1}{(n+1)!}L\Delta$ is also a projection from the field of meromorphic functions to the field of symmetric meromorphic functions.

\subsection{Non-degenerate Krichever genera}

	\begin{lemma}
		For function~$f(t)$ of the form~\eqref{f:Kr} corresponding to non-degenerate elliptic curve the equation~\eqref{eq:H} is equivalent to an equation of the form
		\begin{equation}
			\label{eq:Hab}
			\sum_{k=0^n}
			\frac{\mathcal{A}_k}{f^{n+1}(u_k)} = C\prod_{k=0}^n \frac{1}{f(u_k)},
		\end{equation}
		where~$\mathcal{A}_k$ are non-zero abelian functions in~$u_0,\ldots,u_n$.
	\end{lemma}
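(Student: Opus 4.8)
The plan is to substitute $f=f_{\rm Kr}$ directly into~\eqref{eq:H} and to reorganize each summand by means of the addition law for the Baker--Akhiezer function. Since $1/f_{\rm Kr}(t)=e^{-\alpha t}\Phi_s(t)$, the $k$-th summand of~\eqref{eq:H} equals $\prod_{i\ne k}e^{-\alpha(u_i-u_k)}\Phi_s(u_i-u_k)$. Its exponential part is $e^{-\alpha\sum_{i\ne k}(u_i-u_k)}=e^{-\alpha(S-(n+1)u_k)}$ with $S=u_0+\dots+u_n$, which separates into a factor $e^{-\alpha S}$ common to all $k$ and a per-index factor $e^{(n+1)\alpha u_k}$; the real work is to treat the product $\prod_{i\ne k}\Phi_s(u_i-u_k)$ in the same fashion.

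First I would establish the factorization
\[
\Phi_s(u_i-u_k)=\frac{\Phi_s(u_i)}{\Phi_s(u_k)}\,R_{ik},
\qquad
R_{ik}:=\frac{\Phi_s(u_i-u_k)\,\Phi_s(u_k)}{\Phi_s(u_i)},
\]
and then identify each $R_{ik}$ as an abelian function. Applying~\eqref{eq:Phi add 2} with $x=u_i$, $y=-u_k$ together with~\eqref{eq:Phi minus} in the form $\Phi_s(-u_k)=(\wp(s)-\wp(u_k))/\Phi_s(u_k)$ writes $R_{ik}$ as $-\tfrac12\bigl(\wp(s)-\wp(u_k)\bigr)$ times the ratio of two $3\times3$ determinants whose columns are built from $1,\wp,\wp',\wp^2$ evaluated at $u_i$, $u_k$ and $s$. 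In particular $R_{ik}$ is a rational function of $\wp(u_i),\wp'(u_i),\wp(u_k),\wp'(u_k)$ whose coefficients depend only on the constants $\wp(s),\wp'(s)$, hence a nonzero abelian function of each of the variables $u_i$, $u_k$. Here the non-degeneracy of the elliptic curve is essential: only for a genuine rank-two lattice are $\wp,\zeta,\sigma$ and the identities~\eqref{eq:Phi add 2},~\eqref{eq:Phi minus} at our disposal, and the degenerate case is handled by a separate argument.

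Next I would multiply these factorizations over $i\ne k$. The telescoping identity $\prod_{i\ne k}\Phi_s(u_i)=\bigl(\prod_{j=0}^n\Phi_s(u_j)\bigr)/\Phi_s(u_k)$ gives
\[
\prod_{i\ne k}\Phi_s(u_i-u_k)=\frac{\prod_{j=0}^n\Phi_s(u_j)}{\Phi_s(u_k)^{\,n+1}}\,\mathcal A_k,
\qquad
\mathcal A_k:=\prod_{i\ne k}R_{ik},
\]
so the $k$-th summand of~\eqref{eq:H} equals $e^{-\alpha S}\bigl(\prod_{j}\Phi_s(u_j)\bigr)\,\mathcal A_k\cdot e^{(n+1)\alpha u_k}/\Phi_s(u_k)^{n+1}$, and $e^{(n+1)\alpha u_k}/\Phi_s(u_k)^{n+1}=f_{\rm Kr}(u_k)^{n+1}$. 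Dividing~\eqref{eq:H} by the common factor $e^{-\alpha S}\prod_{j}\Phi_s(u_j)$ (which is not identically zero, so the manipulation is reversible and yields an equivalence) and using $\Phi_s(u_j)^{-1}=e^{-\alpha u_j}f_{\rm Kr}(u_j)$ on the right, one obtains an equation of the form~\eqref{eq:Hab} with coefficients $\mathcal A_k$. Finally, since each $\mathcal A_k$ is a product of $n$ nonzero abelian functions, it is itself a nonzero abelian function of $u_0,\dots,u_n$, which completes the proof.

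The calculation is routine once the factorization $\Phi_s(u_i-u_k)=\tfrac{\Phi_s(u_i)}{\Phi_s(u_k)}R_{ik}$ is available; the one point that needs real care --- and the actual content of the lemma --- is that the $R_{ik}$, and hence the $\mathcal A_k$, are honest abelian functions and not merely meromorphic ones. This is exactly why it pays to go through the determinant form~\eqref{eq:Phi add 2} of the addition theorem: it presents $R_{ik}$ as a rational expression in $\wp$ and $\wp'$, so that the quasi-periodicity (Bloch) multipliers of $\Phi_s$ have visibly cancelled. Alternatively, one can compute $R_{ik}$ straight from~\eqref{f:BA} as the $\sigma$-quotient $\dfrac{\sigma(s-u_i+u_k)\,\sigma(u_i)\,\sigma(s-u_k)}{\sigma(s)\,\sigma(u_i-u_k)\,\sigma(s-u_i)\,\sigma(u_k)}$ --- in which the factors $e^{\zeta(s)\,\cdot}$ cancel automatically --- and then check that the remaining Legendre period factors cancel separately in $u_i$ and in $u_k$.
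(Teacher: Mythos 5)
Your argument is correct and follows essentially the same route as the paper: both factor each summand of~\eqref{eq:H} into a power of $f$ at a single variable times a product of pairwise factors $\Phi_s(u_i-u_k)\Phi_s(u_k)/\Phi_s(u_i)$, which are identified as abelian via the determinant addition formula~\eqref{eq:Phi add 2} together with~\eqref{eq:Phi minus}; your bookkeeping of the exponential prefactors $e^{\alpha t}$ is in fact more careful than the paper's. The only discrepancy is that your final equation reads $\sum_k\mathcal{A}_k f^{n+1}(u_k)=C\prod_k f(u_k)$, the reciprocal of the literal form~\eqref{eq:Hab}, but it has the identical structure (one non-abelian factor per summand, quasi-periodic with multiplier $\varepsilon_l^{\mp(n+1)}$) and serves the subsequent period-shift argument equally well.
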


	\begin{remark}
		The explicit expression for~$\mathcal{A}_k$ can be extracted from the proof, but it is not so important as the fact that it is an abelian function.
	\end{remark}

	\begin{proof}
		Let us multiply equation~\eqref{eq:H} by~$\prod_{k=0}^n\frac{1}{f(u_k)}$ and obtain
		\[
			\sum_{k=0}^n \frac{1}{f^{n+1}(u_k)} \prod_{i\ne k} \mathcal{B}_{i,k} = C\prod_{k=0}^n \frac{1}{f(u_k)},
		\]
		where~$\mathcal{B}_{i,k} = \mathcal{B}_{i,k}(u_i,u_k) = \frac{\Phi(u_k-u_i)}{\Phi(u_k)}\Phi(u_i)$.
		From the properties of Baker--Akhiezer function it follows that functions~$\mathcal{B}_{i,k}$ are abelian in~$u_i$ and~$u_k$.
		This can be directly shown using the addition law for Baker--Akhiezer function~(see~\eqref{eq:Phi add}):
		\begin{multline*}
			\mathcal{B}_{i,k} =
			\frac{\Phi(u_k-u_i)}{\Phi(u_k)}\Phi(u_i) =
			\frac{1}{\wp(s)-\wp(u_i)} \frac{\Phi(u_k - u_i)}{\Phi(u_k)\Phi(-u_i)} =\\=
			-\frac{1}{2(\wp(s)-\wp(u_i))}
			\frac{
				\begin{vmatrix}
					1 & 1 & 1 \\
					\wp(u_k) & \wp(u_i) & \wp(s) \\
					\wp'(u_k) & -\wp'(u_i) & \wp(s) \\
				\end{vmatrix}
			}{
				\begin{vmatrix}
					1 & 1 & 1 \\
					\wp(u_k) & \wp(u_i) & \wp(s) \\
					\wp^2(u_k) & \wp^2(u_i) & \wp^2(s) \\
				\end{vmatrix}
			}.
		\end{multline*}
		Therefore functions~$\mathcal{A}_k = \prod_{i\ne k}\mathcal{B}_{i,k}$ are abelian in all variables~$u_0,\ldots,u_n$.
	\end{proof}

	\begin{theo}
		Suppose that a non-degenerated Baker--Akhiezer function~$f(t)$ is a solution of Hirzebruch $n$-equation.
		Then in this equation~$C=0$, and~$f(t)$ is an elliptic function of level~$d$, where~$d$ divides~$n+1$.
	\end{theo}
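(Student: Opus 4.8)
The plan is to exploit the quasi-periodicity of the Baker--Akhiezer function~\eqref{f:BA}. Using the transformation law of the Weierstrass $\sigma$-function one computes, for every period $2\omega\in\Lambda$, a nonzero constant $\nu(2\omega)$ independent of $t$ with
\[
	\Phi_s(t+2\omega)=e^{2\omega\zeta(s)-2\eta s}\Phi_s(t),\qquad
	f(t+2\omega)=\nu(2\omega)\,f(t),\qquad
	\nu(2\omega)=e^{2\omega(\alpha-\zeta(s))+2\eta s},
\]
where $\eta=\eta(2\omega)$ is the corresponding quasi-period of $\zeta$; since $2\omega\mapsto\eta(2\omega)$ is additive, $\nu\colon\Lambda\to\mathbb{C}^\times$ is a group homomorphism.

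First I would substitute $u_0\mapsto u_0+2\omega$ into~\eqref{eq:H}. Writing $T_k=\prod_{i\ne k}f(u_i-u_k)^{-1}$, this shift multiplies $T_0$ by $\nu(2\omega)^n$ and each $T_k$ with $k\ne 0$ by $\nu(2\omega)^{-1}$; subtracting $\nu(2\omega)^{-1}$ times the original equation $\sum_k T_k=C$ from the shifted one and clearing $\nu(2\omega)$ gives
\[
	\bigl(\nu(2\omega)^{n+1}-1\bigr)T_0 = C\bigl(\nu(2\omega)-1\bigr),
\]
which must hold identically in $u_1,\dots,u_n$. Since $f(t)=t+\dots$ is nonconstant, $T_0$ depends nontrivially on $u_1$, so both sides vanish: $\nu(2\omega)^{n+1}=1$ for every $2\omega\in\Lambda$, and $C=0$ as soon as $\nu$ is nontrivial on some period.

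From $\nu(2\omega)^{n+1}=1$ for all periods it follows that $f(t)^{n+1}$ is $\Lambda$-periodic, hence elliptic; by~\eqref{f:Kr} and~\eqref{f:BA} its only zero is of order $n+1$ at the lattice points, its only pole is of order $n+1$ at $s+\Lambda$, and it is normalized as $t^{n+1}+\dots$. Letting $d\mid n+1$ be the order of the finite cyclic group $\nu(\Lambda)\subset\mathbb{C}^\times$, the function $f^d$ is $\Lambda$-periodic with a single zero of order $d$ at $0$, a single pole of order $d$ at $P=s$, and normalization $t^d+\dots$; by uniqueness of such a function $f^d=g$, so $f=\sqrt[d]{g}=f_d$ is an elliptic function of level $d$ with $d\mid n+1$. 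Finally, $d=1$ is impossible, since then $f$ would be an elliptic function whose only pole in a fundamental domain is the simple pole at $s$, of nonzero residue $e^{\alpha s}/\Phi_s'(s)$, contradicting that the residues of an elliptic function sum to zero; hence $d\geqslant 2$, $\nu$ is nontrivial on some period, and $C=0$. I expect the first paragraph --- getting the quasi-period factor of $\Phi_s$ exactly right and confirming that it defines a genuine character --- to be the main point requiring care; the remainder is elementary together with classical elliptic function theory.
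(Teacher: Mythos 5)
Your argument is correct, and its engine is the same as the paper's: shift $u_0$ by a lattice period, read off the multiplier $\nu(2\omega)$ of $f$ from the quasi-periodicity of the $\sigma$-function, conclude $\nu(2\omega)^{n+1}=1$, and rule out the trivial character (hence force $C=0$ and level $d\geqslant 2$) by the fact that an elliptic function cannot have a single simple pole in the period parallelogram. The genuine structural difference is that you bypass the paper's preparatory Lemma, which multiplies \eqref{eq:H} by $\prod_k f(u_k)^{-1}$ and invokes the addition theorem \eqref{eq:Phi add} to recast the equation as \eqref{eq:Hab} with abelian coefficients $\mathcal{A}_k$, precisely so that the shift of $u_0$ affects only one summand. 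Your observation that under the shift each $T_k$ with $k\ne 0$ is multiplied by exactly $\nu(2\omega)^{-1}$, so that subtracting $\nu(2\omega)^{-1}$ times the original equation eliminates all of them at once, achieves the same isolation directly from \eqref{eq:H}; this makes the addition theorem unnecessary here and collapses the paper's two-case analysis ($C=0$ versus $C\ne 0$) into the single identity $(\nu^{n+1}-1)T_0=C(\nu-1)$, from which both conclusions drop out simultaneously because $T_0$ is nonconstant in $u_1$. You are also more explicit than the paper about the final step, identifying $d$ as the order of the finite cyclic group $\nu(\Lambda)\subset\mathbb{C}^\times$ and matching $f^d$ with the normalized function $g$ by uniqueness of the divisor $d\cdot 0-d\cdot s$. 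Both proofs ultimately rest on the same two facts (the multiplier system of $\Phi_s$ and the residue theorem); yours is the leaner implementation, while the paper's Lemma has the side benefit of exhibiting the abelian structure of the coefficients explicitly.
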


	\begin{proof}
		Let~$\omega_1,\omega_2,\omega_3$ be the half-periods of elliptic curve corresponding to the function~$f(t)$.
		Under the argument shift by period~$2\omega_l$ the function~$f(t)$ multiplies by the factor~$\varepsilon_l$:
		\[
			\frac{1}{f(t+2\omega_l)} = \frac{\varepsilon_l}{f(t)}.
		\]
		This factor~$\varepsilon_l$ depends only on the choice of period~$2\omega_l$ and does not depend on~$t$.
		It can be seen from the explicit form~\eqref{f:BA} of the Baker--Akhiezer function and from the properties of~$\sigma$-function.
		We can write it down explicitly, but we do not need its explicit form.
		Actually, the statements to prove is that~$\varepsilon_l^{n+1}=1$ for all~$l$ and that~$C=0$, if the elliptic curve is non-degenerate.

		Suppose that~$C=0$.
		Note that only the first summand in the left-hand side of~\eqref{eq:Hab} depends on~$u_0$.
		Consider the shift of~$u_0$ by the period~$2\omega_l$ for some~$l$ and subtract from the result the equation~\eqref{eq:Hab}:
		\[
				(\varepsilon_l^{n+1} - 1) \frac{\mathcal{A}_k}{f^{n+1}(u_0)} = 0.
		\]
		Since the ratio is a non-zero function, we obtain~$\varepsilon_l^{n+1} = 1$ for each~$l$.

		Assume that~$C\ne 0$.
		Consider the shift of~$u_0$ by the period~$2\omega_l$ for some~$l$ and subtract from the result the equation~\eqref{eq:Hab}:
		\begin{equation}
			\label{eq:HCne0}
			(\varepsilon_l^{n+1} - 1) \frac{\mathcal{A}_k}{f^{n}(u_0)} = C(\varepsilon_l-1) \prod_{i\ne 0}\frac{1}{f(u_i)}.
		\end{equation}
		Right-hand side of~\eqref{eq:HCne0} is non-zero and does not depend on~$u_0$.
		At the same time left-hand side multiplies by~$\varepsilon_l^n$ under the shift of~$u_0$ by~$2\omega_l$.
		Therefore~$\varepsilon_l^n=1$ and~$f^n(t)$ is abelian.
		Denote~$\widetilde{\mathcal{A}}_k = \mathcal{A}_k/f^n(u_k)$.
		It is also a non-zero abelian function in~$u_0,\ldots,u_n$.
		Equation~\eqref{eq:Hab} can be rewritten in the following form:
		\[
			\sum_{k=0}^n\frac{\widetilde{A}_k}{f(u_k)} = C\prod_{k=0}^n\frac{1}{f(u_k)}.
		\]
		Shifting~$u_0$ by~$2\omega_l$ and subtracting the initial equation, we obtain
		\[
			\frac{1}{f(u_0)}(\varepsilon_l-1)
			\left(
				\widetilde{\mathcal{A}}_0 - C\prod_{k\ne 0}\frac{1}{f(u_k)}
			\right) = 0.
		\]
		Note that~$\widetilde{\mathcal{A}}_0$ is abelian in~$u_0,\ldots,u_n$ and the product has the unique pole at zero in the period parallelogram with respect to variable~$u_0$ for generic values of other~$u_i$.
		Therefore the expression in the brackets is a non-zero function.
		Hence~$\varepsilon_l=1$ for each~$l$, and~$f(t)$ is an abelian function.
		But it has the unique simple pole in the period parallelogram.
		We come to the contradiction in the case~$C\ne 0$.
	\end{proof}

\subsection{Degenerate Krichever genera}

	\begin{lemma}
		\label{lem:Td}
		The function $f_{\rm Td}(t)$ defined in~\eqref{f:Todd} satisfies~\eqref{eq:H} for each~$n$.
	\end{lemma}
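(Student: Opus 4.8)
The plan is to reduce \eqref{eq:H} for $f_{\rm Td}$ to an elementary identity between rational functions by an exponential change of variables, and then to prove that identity by a residue (equivalently, partial–fraction) computation. Assume first $\alpha\neq\beta$ and set $\gamma:=\alpha-\beta$, $x_j:=e^{\gamma u_j}$. Dividing the numerator and denominator of $f_{\rm Td}(t)^{-1}$ by $e^{\beta t}$ gives $\dfrac{1}{f_{\rm Td}(t)}=\dfrac{\alpha-\beta e^{\gamma t}}{e^{\gamma t}-1}$, so
\[
\frac{1}{f_{\rm Td}(u_i-u_k)}=\frac{\alpha x_k-\beta x_i}{x_i-x_k},\qquad
\prod_{i\neq k}\frac{1}{f_{\rm Td}(u_i-u_k)}=(-1)^n\prod_{i\neq k}\frac{\alpha x_k-\beta x_i}{x_k-x_i}.
\]
Since $x_j\neq 0$ always and the $x_j$ are pairwise distinct for generic $u_j$, proving \eqref{eq:H} amounts to checking that
\[
\Sigma:=\sum_{k=0}^{n}\prod_{i\neq k}\frac{\alpha x_k-\beta x_i}{x_k-x_i}
\]
is a constant, on the dense locus where $x_0,\dots,x_n$ are distinct and nonzero; the identity then extends to all $u_0,\dots,u_n$ by continuity.

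To compute $\Sigma$ I would introduce the auxiliary rational function
\[
R(w):=\frac{1}{w}\prod_{i=0}^{n}\frac{\alpha w-\beta x_i}{w-x_i}=\frac{\prod_{i=0}^n(\alpha w-\beta x_i)}{w\,P(w)},\qquad P(w):=\prod_{i=0}^n(w-x_i),
\]
which is a proper rational function (numerator of degree $n+1$, denominator of degree $n+2$) with only simple poles, at $w=0$ and at $w=x_k$. At $w=x_k$ the factor $\alpha w-\beta x_i$ with $i=k$ equals $\gamma x_k$, which cancels the $1/w$; a direct residue computation gives $\operatorname{Res}_{w=x_k}R=\gamma\prod_{i\neq k}\dfrac{\alpha x_k-\beta x_i}{x_k-x_i}$, while $\operatorname{Res}_{w=0}R=\prod_{i=0}^n\dfrac{-\beta x_i}{-x_i}=\beta^{n+1}$. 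Expanding $R$ at $w=\infty$ one has $R(w)=\dfrac{\alpha^{n+1}}{w}+O(w^{-2})$, so $R$ has no polynomial part and the sum of all its residues equals $\alpha^{n+1}$. Hence $\gamma\,\Sigma+\beta^{n+1}=\alpha^{n+1}$, that is $\Sigma=\dfrac{\alpha^{n+1}-\beta^{n+1}}{\alpha-\beta}$, and therefore \eqref{eq:H} holds with $C=(-1)^n\dfrac{\alpha^{n+1}-\beta^{n+1}}{\alpha-\beta}$. The case $\alpha=\beta$ then follows by continuity of both sides in the parameters, and the initial conditions $f_{\rm Td}(0)=0$, $f_{\rm Td}'(0)=1$ are read off directly from \eqref{f:Todd}.

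I do not expect a serious obstacle in this argument: the two ingredients are the substitution $x_j=e^{(\alpha-\beta)u_j}$, which turns \eqref{eq:H} into a rational identity, and the choice of the auxiliary function $R(w)$, with the extra factor $1/w$ inserted precisely so that the cancellation with $\gamma x_k$ reproduces the summands of $\Sigma$. The only points requiring care are the sign $(-1)^n$ coming from $\prod_{i\neq k}(x_i-x_k)=(-1)^n\prod_{i\neq k}(x_k-x_i)$, and the bookkeeping of the residue "at infinity" (equivalently, the vanishing of the polynomial part of $R$ together with the evaluation of its $w^{-1}$–coefficient), both of which are routine.
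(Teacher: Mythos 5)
Your argument is correct, and it takes a genuinely different route from the paper. The paper first rewrites $f_{\rm Td}(t)^{-1}=q+\eta\coth\eta t$, applies the addition law for the hyperbolic cotangent, substitutes $q_i=\coth\eta u_i$, and clears denominators by the Vandermonde determinant $\prod_{i<j}(q_i-q_j)$; the left-hand side then becomes a skew-symmetric polynomial of degree at most $n$ in each variable, hence a constant multiple of the Vandermonde itself, which proves constancy without ever identifying the constant. You instead substitute $x_j=e^{(\alpha-\beta)u_j}$ to turn each summand directly into $\prod_{i\ne k}\frac{\alpha x_k-\beta x_i}{x_i-x_k}$ and then sum by residues of the auxiliary function $R(w)=w^{-1}\prod_i\frac{\alpha w-\beta x_i}{w-x_i}$. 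I checked the computation: the residues at $x_k$, at $0$, and at infinity are as you state, giving $\Sigma=\frac{\alpha^{n+1}-\beta^{n+1}}{\alpha-\beta}$, and the sign bookkeeping $\prod_{i\ne k}(x_i-x_k)=(-1)^n\prod_{i\ne k}(x_k-x_i)$ is right since there are $n$ factors. Your approach buys the explicit value $C=(-1)^n\sum_{j=0}^{n}\alpha^{j}\beta^{n-j}$, which the paper's degree-count argument does not produce, and it avoids the addition law entirely; the paper's argument, on the other hand, is the one that generalizes to the broader symmetric-function machinery used later (the operator $L$ and Schur--Puiseux functions), which is presumably why it is phrased that way. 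The degenerate case $\alpha=\beta$ via continuity and the density/meromorphic-continuation step for coincident $u_i$ are both handled adequately.
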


	\begin{proof}
		The following relation holds:
		\[
			\frac{e^{\alpha t}-e^{\beta t}}{\alpha e^{\beta t}-\beta e^{\alpha t}} =
			\frac{1}{k + \eta \cth \eta t},
		\]
		where~$\alpha = \eta - q$ and~$\beta = -\eta - q$.
		The following addition law holds:
		\[
			\cth(x+y) = \frac{\cth(x)\cth(y) + 1}{\cth(x) + \cth(y)}.
		\]
		Let us substitute it:
		\[
			\sum_{k=0}^n\prod_{i\ne k}(k+\eta\cth(\eta u_i-\eta u_j))=\sum_{k=0}^n\prod_{i\ne k}\left(k+\eta\frac{1 - \cth(\eta u_i)\cth(\eta u_k))}{\cth(\eta u_k)-\cth(\eta u_i)}\right).
		\]
		Let is multiply it by~$\prod_{i<j}(\cth(\eta u_i)-\cth(\eta u_j))$:
		\[
			\sum_{k=0}^n\prod_{i\ne k}(k\cth\eta u_k - k\cth\eta u_i + \eta - \eta\cth\eta u_i\cth\eta u_k)\prod_{i<j\atop i,j\ne k}(\cth\eta u_i-\cth\eta u_j).
		\]
		Denote~$q_i=\cth\eta u_i$ and obtain
		\[
			\sum_{i\ne 0}\prod_{i\ne k}(kq_i-kq_k+\eta-\eta q_iq_k)\prod_{i<j\atop i,j\ne k}(q_i-q_j).
		\]
		This polynomial is skew-symmetric in the variables~$q_i$.
		There exists the unique up to proportionality skew-symmetric polynomial in $q_i$, $i=0,\ldots,n$, such that the degree of each the variable in each monomial does not exceed $n$:
		\[
			\prod_{i<j}(q_i-q_j).
		\]
		Therefore the equality
		\[
			\sum_{k=0}^n\prod_{i\ne k}(q+\eta\cth\eta(u_k-u_i)) = C
		\]
		holds for each $n$ with a constant $C$ depending on $n$.
	\end{proof}

	\begin{theo}
		Suppose that a degenerate Baker--Akhiezer function~$f_{\rm sing}(t)$ is a solution of Hirzebruch $n$-equation.
		Then one of the following cases holds:
		\begin{itemize}
			\item $\lambda = 0$,~i.\,e.~$f(t)$ has form~\eqref{f:Todd};
			\item $f(t) = \frac{1}{\mu}e^{\lambda\mu t}\sinh\mu t$, where~$\lambda=\frac{2k}{n+1}$ for~$k\in\mathbb{Z}$.
		\end{itemize}
	\end{theo}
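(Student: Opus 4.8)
The plan is to run the analogue of the non\-degenerate argument, replacing elliptic\-function manipulations by an explicit computation in the ring of symmetric meromorphic (Puiseux) functions, exactly in the spirit of the $L$\-operator formalism set up in \cref{sec:sketch and notation}. First I would pass to multiplicative variables. Assuming $\eta\ne 0$ (if $\eta=0$ the function degenerates further to $te^{\lambda t}/(1+qt)$, which is a limit of the two\-parametric Todd genus and is treated separately at the end), put $x_i=e^{2\eta u_i}$, so that $\coth\eta(u_i-u_k)=\frac{x_i+x_k}{x_i-x_k}$ and
\[
	\frac{1}{f_{\rm sing}(u_i-u_k)}=x_i^{-\nu}x_k^{\nu}\,\frac{a x_i+c x_k}{x_i-x_k},\qquad a=q+\eta,\quad c=\eta-q,\quad \nu=\frac{\lambda}{2\eta}.
\]
Taking the product over $i\ne k$, the exponential prefactors collapse to $x_k^{(n+1)\nu}$ up to the common symmetric factor $P^{-\nu}$ with $P=\prod_i x_i$, so that \eqref{eq:H} becomes the identity of meromorphic functions
\[
	\sum_{k=0}^n x_k^{(n+1)\nu}\prod_{i\ne k}\frac{a x_i+c x_k}{x_i-x_k}=C\,P^{\nu}.
\]

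\textbf{Reduction via the symmetrizing projector.}
Next I would clear the Vandermonde: after multiplying by $\Delta$ and using $\Delta\big/\prod_{i\ne k}(x_i-x_k)=(-1)^{n-k}\Delta^{(k)}$, the left\-hand side equals, up to a nonzero constant, $\Delta\cdot L(h_0)$ with
\[
	h_0=x_0^{(n+1)\nu}\prod_{i=1}^n(a x_i+c x_0)\cdot\prod_{1\le i<j\le n}(x_i-x_j),
\]
just as in the proof that $\frac{1}{(n+1)!}L\Delta$ is the symmetrizing projector. Expanding $\prod_{i=1}^n(a x_i+c x_0)=\sum_{j=0}^n a^{n-j}c^{\,j}x_0^{\,j}e_{n-j}(x_1,\dots,x_n)$ in elementary symmetric polynomials and using $e_{n-j}(x_1,\dots,x_n)\cdot\prod_{1\le i<j\le n}(x_i-x_j)=\pm\,\Delta_{\{0,\dots,n\}\setminus\{j\}}$ (an alternating sum of monomials in $x_1,\dots,x_n$ with exponent multiset $\{0,\dots,n\}\setminus\{j\}$), one finds that $L(h_0)$ is a finite linear combination, with coefficients $\pm a^{n-j}c^{\,j}$, of Schur--Puiseux functions $s_{\mu^{(j)}}$, where $\mu^{(j)}$ is obtained by sorting the exponent multiset $M_j=\{(n+1)\nu+j\}\cup\big(\{0,\dots,n\}\setminus\{j\}\big)$ and subtracting $\delta$, the $j$\-th term vanishing precisely when $M_j$ has a repeated entry. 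Thus \eqref{eq:H} becomes
\[
	\sum_{j=0}^n (\pm)\,a^{n-j}c^{\,j}\,s_{\mu^{(j)}}=\pm\,C\,s_{(\nu^{\,n+1})}
\]
in the ring of symmetric Puiseux functions, and since Schur--Puiseux functions attached to distinct multi\-indices are linearly independent, this collapses to a purely combinatorial problem about the multisets $M_j$.

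\textbf{Case analysis.}
Observe that $M_j=M_{j'}$ for $j\ne j'$ forces $\nu=0$; so for $\nu\ne 0$ all $\mu^{(j)}$ are distinct, and the identity above can hold only if either $C=0$ and every surviving term vanishes, or exactly one term survives and equals $s_{(\nu^{n+1})}$, i.e.\ $M_{j_0}=\{\nu,\nu+1,\dots,\nu+n\}$ for the unique $j_0$ with nonzero coefficient. If $a\ne 0$ and $c\ne 0$ (that is, $q\ne\pm\eta$) all $n+1$ coefficients are nonzero, and one checks that $C=0$ is impossible (intersecting the conditions ``$M_j$ has a repeat'' over all $j$ yields the empty set) and that no single term can survive either (if $M_{j_0}$ is a block of consecutive integers then $\nu=\pm1$ and $j_0\in\{0,n\}$, but then all remaining $M_j$ are still repeat\-free); hence $\nu=0$, i.e.\ $\lambda=0$ and $f=f_{\rm Td}$, which is case~(1), consistent with \cref{lem:Td}. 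If $c=0$ (the case $q=\eta$, in which $f_{\rm sing}$ takes the form $\frac{1}{\mu}e^{\lambda\mu t}\sinh\mu t$) only the $j=0$ term is present and one is left with the alternative: either $(n+1)\nu$ is a repeated entry of $M_0$ (forcing $C=0$), or $M_0=\{\nu,\dots,\nu+n\}$ (forcing $C\ne0$ and $\nu\in\{0,1\}$); together these pin $\nu$, hence $\lambda$, to the stated arithmetic progression, and the case $c=0$ replaced by $a=0$ (i.e.\ $q=-\eta$) is symmetric. This gives case~(2). The expected main obstacle is the middle step: organising the antisymmetrization of $h_0$ so that the signs, the common factor, and the precise exponent multisets $M_j$ come out correctly, and justifying linear independence of the Schur--Puiseux functions when the exponents are allowed to be negative or non\-integral --- this is exactly the point where the excerpt's ``symmetric meromorphic functions rather than symmetric polynomials'' caveat is needed.
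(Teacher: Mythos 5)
Your proposal follows essentially the same route as the paper: pass to multiplicative variables $x_i=e^{2\eta u_i}$, clear the Vandermonde, apply the antisymmetrizer $L$, expand $\prod_i(ax_0+cx_i)$ in elementary symmetric polynomials so that the equation becomes a linear relation among Schur--Puiseux functions indexed by the exponent multisets $M_j$, and then run the same case split on the vanishing of $a$ or $c$ (the paper's $a$ and $b$), with the same combinatorial analysis of repeated entries versus a consecutive block. The only loose end is the further degeneration $\eta=0$, which you promise to treat ``at the end'' but never do --- the paper dispatches it in two lines by observing that for $\lambda\ne 0$ the sum is a nontrivial combination of exponentials with polynomial coefficients, hence not constant.
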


	\begin{proof}
		One of two cases holds: either~$\mu\ne 0$ in~\eqref{f:BAd} or the limit case
		\[
			f_{\rm sing}^\circ := \lim_{\mu\to 0}f_{\rm sing} = \frac{te^{\lambda t}}{1+qt}.
		\]
		Clearly,~$f_{\rm sing}^\circ$ satisfies~\eqref{eq:H} for~$\lambda=0$.
		For~$\lambda\ne 0$ the expression $\sum_{k=0}^n\prod_{i\ne k} e^{\lambda(u_i-u_k)}(q+u_k-u_i)$ is a combination of non-trivial exponents with non-trivial polynomial coefficients.
		Obviously, it is not a constant function.
		Therefore, for~$\lambda\ne 0$ equation~\eqref{eq:H} does not hold.
		So we assume after that~$\mu\ne 0$.

		Note that applying the replacement~$f_{\rm sing}(t) \mapsto \mu f_{\rm sing}(t/\mu)$, $q\mapsto q\mu$, $\lambda\mapsto \lambda\mu$, we can pass from~\eqref{f:BAd} to the function
		\begin{equation}
			\label{f:BAd'}
			f_{\rm sing}(t) = \frac{e^{\lambda t}}{q + \cth t}.
		\end{equation}
		Under this replacement~$C$ in~\eqref{eq:H} should be replaced with~$\mu^nC$.

		\begin{gather*}
			\intertext{Let us substitute the function~\eqref{f:BAd'} into~\eqref{eq:H}:}
			\sum_{k=0}^n\prod_{i\ne k}(q + \cth(u_k-u_i))e^{\lambda(u_k-u_i)} =
			\sum_{k=0}^n\prod_{i\ne k}\left( q + \frac{e^{2u_k}+e^{2u_i}}{e^{2u_k} - e^{2u_i}}\right)\frac{e^{\lambda u_k}}{e^{\lambda u_i}} = C.
			\intertext{Let us denote~$x_i^\lambda = e^{2\lambda u_i}$.
			Multiplying this by }
			\label{f:xldelta}
			\prod\limits_{i=0}^n e^{\lambda u_i}\prod\limits_{i<j}(e^{2u_i} - e^{2u_j}) = x_0^{\lambda/2}\ldots x_n^{\lambda/2}\Delta,
			\intertext{where~$\Delta=\Delta_\delta(x_0,\ldots,x_n)$, we get}
			\sum_{k=0}^n(-1)^{kn} x_k^{\lambda(n+1)/2}\prod_{i\ne k}(ax_k + bx_i)\prod_{i < j \atop i,j\ne k}(x_i-x_j) =
			C \prod_{i=0}^n x_i^{\lambda/2} \prod_{i<j}(x_i - x_j),
			\intertext{where~$a = 1+q$ and $b = 1-q$.
			Note that in the replacement above we can take~$-\mu$ instead of~$\mu$.
			This corresponds to the swap of~$a$ and~$b$.
			Now let us apply the operator~$L$ and the equality~\eqref{eq:L} to this equation.
			The right-hand side gives }
			L\left( C (x_0\ldots x_n)^{\lambda/2}\Delta\right) = C(x_0\ldots x_n)^{\lambda/2}L\Delta = C(n+1)! (x_0\ldots x_n)^{\lambda/2}.
			\intertext{Now let us transform the left-hand side.
			Note that we alternate over~$\mathfrak{S}_{n+1}$ applying~$L$:}
			L\left(
				\sum_{k=0}^n(-1)^{kn} x_k^{\lambda(n+1)/2}\prod_{i\ne k}(ax_k + bx_i)\prod_{i < j \atop i,j\ne k}(x_i-x_j)
			\right) =
			\intertext{Therefore we can take out of the brackets alternation over a subgroup of~$\mathfrak{S}_{n+1}$.
			We take out alternation over the subgroup generated by the cyclic permutation~$\theta = (0,1,\ldots,n)$:}
			= L \sum_{k=0}^n(-1)^{k\theta} \theta^k
			(n+1) L
			\left(
				x_0^{\lambda(n+1)/2} \prod_{i\ne 0}(ax_0 + bx_i)\prod_{0 < i < j}(x_i-x_j)
			\right) =
			\intertext{Take the alternation over~$\langle\theta\rangle\subset\mathfrak{S}_{n+1}$ into~$L$ and get the alternation over the subgroup~$\mathfrak{S}_n$ permuting~$x_1,\ldots,x_n$ out of the inner product:}
			= (n+1)L
			\left(
				x_0^{\lambda(n+1)/2} \prod_{i\ne 0}(ax_0 + bx_i) \sum_{\theta\in\mathfrak{S}_n}(-1)^\theta\theta(x_1^{n-1}\ldots x_{n-1})
			\right) =
			\intertext{Denote by~$\sigma_i$ the elementary symmetric polynomials in~$x_1,\ldots,x_n$.
			Note that alternation over a group commutes with multiplication by a polynomial that is invariant under the action of this group:}
			(n+1)L \sum_{\theta \in \mathfrak{S}_n}(-1)^\theta \theta
			\left(
				x_0^{\lambda(n+1)/2}x_1^{n-1}\ldots x_{n-1} \cdot
				\sum_{k=0}^n \binom{n}{k}b^ka^{n-k}\sigma_k x_0^{n-k}
			\right) =
			\intertext{Now we get}
			= (n+1)! L
			\left(
				\sum_{k=0}^n \binom{n}{k} b^k a^{n-k} \cdot
				x_0^{n-k + \frac{\lambda}{2}(n+1)} x_1^{n-1}\ldots x_{n-1} \cdot\sigma_k
			\right) =
			\intertext{Recall that~$L(F)=0$ for any expression~$F$ that is symmetric with respect to transposition of any pair of variables.
			So we can drop all the monomials where the degrees are not pairwise different.
			In the product~$x_0^*x_1^{n-1}\ldots x_{n-1}\sigma_k$ the unique monomial with pairwise different degrees comes from the summand~$x_1\ldots x_k$ in~$\sigma_k$.}
			= (n+1)!L\left(
				\sum_{k=0}^n \binom{n}{k}b^ka^{n-k} x_0^{n+1-k + \frac{\lambda}{2}(n+1)}x_1^nx_2^{n-1}\ldots x_k^{n+1-k} x_{k+1}^{n-k-1}\ldots x_{n-1}
			\right) =
			\intertext{This expression equals the right-hand side of the initial equation:}
			= (n+1)!C(x_0\ldots x_n)^{\lambda/2} = (n+1)!CL(x_0^{n+\lambda/2}\ldots x_n^{\lambda/2}).
		\end{gather*}
		In the remaining part of the proof we deal with equality of the last two formulas:
		\begin{equation}
			\label{eq:HS}
			L\left(
				\sum_{k=0}^n \binom{n}{k}b^ka^{n-k}
				x_0^{n-k + \lambda/2(n+1)} x_1^{n-1}\ldots x_{n-1} \cdot x_1\ldots x_k
			\right)
			= C (x_0\ldots x_n)^{\lambda/2}
		\end{equation}

		Suppose~$b=0$.
		Since~$a+b=2$, we have~$a\ne 0$.
		In this case the function~$f_{\rm sing}(t)$ has the form
		\[
			f_{\rm sing}(t) = \frac{e^{\lambda t}}{\cth t - 1}.
		\]
		Then the left-hand side equals~$L\left(a^n x_0^{n+\lambda/2(n+1)}x_1^{n-1}\ldots x_{n-1}\right)$.
		One of the following two cases holds:
		\begin{itemize}
			\item[$C = 0$]
				Then~$n+\frac{\lambda}{2}(n+1)=k \in\{0,\ldots,n-1\}$ and~$\lambda = 2\frac{k-n}{n+1}$ for~$k\in\{0\ldots,n-1\}$.
				Note that we can pass from function~$f(t)$ being a solution of~\eqref{eq:H} to~$f(t+t_0)$ for any~$t_0$.
				The only condition on the argument shift is that~$f(t+t_0)=t+O(t^2)$.
				In this way we obtain
				\[
					f(t) = \frac{1}{\mu}e^{\lambda\mu t}\sinh \mu t,
				\]
				where~$\lambda=\frac{2k}{n+1}$ and~$k\in\mathbb{Z}$.
				Therefore~$f(t)$ is a degeneration of elliptic function of level~$\frac{n+1}{\operatorname{gcd}(k,n+1)}$.
			\item[$C\ne0$]
				Then
				\[
					L\left(
						a^n x_0^{n+\frac{\lambda}{2}(n+1)}x_1^{n-1}\ldots x_{n-1} - C x_0^{n+\frac{\lambda}{2}}\ldots x_n^{\frac{\lambda}{2}}
					\right) = 0.
				\]
				Minimal degrees of two monomials in brackets coincide,~i.\,e. either~$\frac{\lambda}{2}=0$ or~$n+\frac{\lambda}{2}(n+1)=\frac{\lambda}{2}$,~i.\,e.~$\lambda=0$ or~$-2$.
				These functions are particular cases of~$f_{\rm Td}$.
		\end{itemize}

		The case~$a=0$ can be carried out in the same way.
		As it noted above,~$a$ and~$b$ are symmetric, and their transposition corresponds to the replacement~$\lambda\mapsto -\lambda$.
		The case~$\lambda = 0$ corresponds to~\eqref{f:Todd}.
		In the case~$\lambda=2$ we have that~$f_{\rm sing}$ is a particular case of~\eqref{f:Todd} for~$\alpha=-2$,~$\beta=0$.

		It remains to check the case~$a,b\ne 0$.
		All non-zero images of monomials in~\eqref{eq:HS} under the map~$L$ are linearly independent.
		It follows directly from the fact the sets of degrees can not be obtained from each other by permutations.
		It can be easy shown by induction over~$n$.

		Let us right down sets of degrees of monomials in the left-hand side of~\eqref{eq:HS}:
		\begin{subequations}
			\begin{gather}
				\label{lines1}
				\begin{aligned}
					n  &- \frac{\lambda}{2}(n+1), & n-1,& & n-2,& &\ldots,& & 1,& &0 \\
					n-1&- \frac{\lambda}{2}(n+1), & n,  & & n-2,& &\ldots,& & 1,& &0 \\
					n-2&- \frac{\lambda}{2}(n+1), & n,  & & n-1,& &\ldots,& & 1,& &0 \\
					&& \ldots \\
					1  &- \frac{\lambda}{2}(n+1), & n,  & & n-1,& &\ldots,& & 2,& &0 \\
					   &- \frac{\lambda}{2}(n+1), & n,  & & n-1,& &\ldots,& & 2,& &1 \\
				\end{aligned}
				\intertext{The set of degrees of the right-hand side is}
				\label{lines2}
				n+\frac{\lambda}{2}, n-1+\frac\lambda2, \ldots, 1+\frac\lambda2, \frac\lambda2 \
			\end{gather}
		\end{subequations}

		For~$\lambda=0$ any~$f_{\rm sing}$ is a solution of~\eqref{eq:H}.
		So we can assume that~$\lambda\ne 0$.
		If~$\lambda\not\in\mathbb{R}$, then the set of numbers~$\frac{\lambda}{2},\ldots,n+\frac{\lambda}{2}$ can not coincide with the set~$n+\frac{\lambda}{2}(n+1),n-1,\ldots,0$.
		In this case the equation does not hold.
		So we can assume that~$\lambda\in\mathbb{R}$.

		Let~$\lambda > 0$.
		The last row in~\eqref{lines1} can not be obtained from others by permutations, because its minimal element~$-\frac{\lambda}{2}(n+1)$ is less than minimal elements of other rows.
		All other rows contain element~$0$.
		So we can remove~$0$ from them and use the same argument for the remaining rows.

		Let~$\lambda < 0$.
		The same arguments work inductively, if we take maximal elements, go from the first line to the last consequently eliminating~$n,n-1,\ldots,0$.

		We see that the images of monomials in left-hand side of~\eqref{eq:HS} are linearly independent Schur--Puiseux functions or zeros.
		For each monomial~$P$ we have~$L(P)\in \langle(x_0\ldots x_n)^{\lambda/2}\rangle$.
		Therefore each row in~\eqref{lines1} the corresponding number~$k-\frac{\lambda}{2}(n+1)$ coincides with some other, or the row coincides with~\eqref{lines2} up to a permutation.

		Suppose some row in~\eqref{lines1} coincides with~\eqref{lines2} up to permutation.
		Then~$\lambda=0,\pm2$.
		Suppose that in each row in~\eqref{lines1} there is a pair of equal numbers.
		Then~$-\frac{\lambda}{2}(n+1),\ldots,n-\frac{\lambda}{2}(n+1) \in \{0,\ldots,n\}$.
		This means that~$\lambda=0$.
	\end{proof}

	This completes the classification of $n$-multiplicative Krichever genera.

\bibliographystyle{unsrt}

\begin{thebibliography}{99}
\bibitem{BucNet15}
V.~M.~Buchstaber, I.~V.~Netay,
``Hirzebruch functional equation and elliptic Functions of level $d$''
{\it Functional Analysis and Its Applications},
{\bf 49}:4 2015,
1--17;
{\it Funct. Anal. Appl.},
{\bf 49}:4 2015,
239--252.

\bibitem{Mus11}
Oleg~R.~Musin,
``On rigid Hirzebruch genera''
{\it Mosc. Math.~J.},
{\bf 11}:1 2011,
139--147.

\bibitem{Hirz88}
F.\,Hirzebruch
``Elliptic genera of level $N$ for complex manifolds'',
{\it Differential geometrical methods in theoretical physics},
37--63,
1988.

\bibitem{HBJ}
F.~Hirzebruch, Th.~Berger, R.~Jung,
{\it Manifolds and Modular Forms},
Friedr.~Vieweg~\&~Sons, Braunschweig,
1992.

\bibitem{BucBun14}
V.~M.~Buchstaber, E.~Yu.~Netayб
``$\mathbb{C}P(2)$-multiplicative Hirzebruch genera and elliptic cohomology'',
{\bf 69}:4(418) (2014)
181--182;
{\it Russian Math. Surveys},
{\bf 69}:4 (2014),
757--759.

\bibitem{BucBun15}
V.~M.~Buchstaber, E.~Yu.~Bunkova,
``Manifolds of solutions for Hirzebruch functional equations'',
{\it Proc. Steklov Inst. Math.},
{\bf 290}:1 (2015)
125--137.

\bibitem{Kri90}
I.~M.~Krichever,
``Generalized elliptic genera and Baker--Akhiezer functions'',
{\it Mat. Zametki}
{\bf 47}:2 (1990),
34--45;
{\it Math. Notes},
{\bf 47}:2 (1990)
132--142.

\bibitem{Kri80}
I.~M.~Krichever,
``Elliptic solutions of the Kadomtsev–Petviashvili equation and integrable systems of particles'',
{\it Funct. Anal. Appl.},
{\bf 14}:4 (1980),
282--290.

\bibitem{WW}
E.~T.~Whittaker, G.~N.~Watson,
{\it A course of modern analysis},
CUP,
1923.


\bibitem{FultonYoungTableaux}
W.~Fulton,
{\it Young Tableaux with Applications to Representation Theory and Geometry},
CUP,
1997.


\end{thebibliography}

\addtocontents{toc}{\protect\setcounter{tocdepth}{-2}}

\end{document}